\theoremstyle{plain}
\newtheorem{theorem}{Theorem}[section]
\newtheorem{proposition}{Proposition}[section]
\newtheorem{corollary}{Corollary}[section]
\theoremstyle{definition}
\newtheorem{remark}{Remark}[section]
\newcommand{\Prob}[1]{\mathrm{Pr}\left(#1\right)} 
\newcommand{\E}[1]{E\left[#1\right]} 
\newcommand{\Var}[1]{V\left[#1\right]} 
\newcommand{\GaussFsymbol}{{}_2F_1} 
\newcommand{\GaussF}[4]{\GaussFsymbol\left(#1, #2; #3; #4\right)} 
\begin{document}
\title{Complete corrected formula for generating functions of the hypergeometric distribution}
\author{Ken Yamamoto}
\affiliation{Faculty of Science, University of the Ryukyus, Nishihara, Okinawa 903-0213, Japan}

\begin{abstract}
The hypergeometric distribution is a popular distribution, whose properties have been extensively investigated.
Generating functions of this distribution, such as the probability-generating function, the moment-generating function, and the characteristic function, are known to involve the Gauss hypergeometric function.
We elucidate that the existing formula of generating functions is incomplete and provide the corrected formula.
In view of the utility and applicability of the hypergeometric distribution, the exact and correct foundation is crucially important.
\end{abstract}

\maketitle

\textbf{Keywords:} hypergeometric distribution; Gauss hypergeometric function; probability-generating function; moment-generating function

\textbf{MSC:} 60E10; 33C05; 62E15

\section{Introduction}
The hypergeometric distribution is a useful and popular discrete probability distribution in probability theory and statistics.
This distribution describes the number of successes in a sample drawn from a finite population without replacement.
The hypergeometric distribution has been applied to biology~\cite{Fury}, complex networks~\cite{Casiraghi}, and blockchain technology~\cite{Hafid}.
Furthermore, the hypergeometric distribution provides the theoretical basis for Fisher's exact test~\cite{Rivals} and the Mantel--Haenszel test~\cite{Wittes} used in medical statistics, and mark--recapture methods~\cite{Seber} in ecology.
A simple example of the hypergeometric distribution is picking colored balls from an urn~\cite{Johnson, Balakrishnan}.
Let an urn contain $K$ white balls and $N-K$ black balls.
We let $X$ denote the number of white balls in $n$ balls picked randomly.
The probability that $k$ balls are white is given by
\begin{equation}
\Prob{X=k}=\left.\binom{K}{k}\binom{N-K}{n-k}\middle/\binom{N}{n}\right..
\label{eq:pmf}
\end{equation}
This is the probability mass function of the hypergeometric distribution.
The support of this distribution, i.e., the set of $k$ with $\Prob{X=k}>0$, consists of integers within $\max\{0, n+K-N\}\le k\le\min\{n, K\}$.
If we consider the factorial of negative integers as infinity, Eq.~\eqref{eq:pmf} is correct for all integers $k$.

The hypergeometric distribution is related to several distributions.
When picking balls \emph{with replacement}, the number of white balls follows the binomial distribution~\cite{Johnson}, which is simpler than the hypergeometric distribution.
When we pick balls one by one without replacement until the number of black balls reaches a given value, the number of white balls follows the negative hypergeometric distribution~\cite{Mukhopadhyay}.
The multivariate hypergeometric distribution~\cite{Bishop} appears when the balls have more than two colors.
In the P\'olya urn model~\cite{Mahmoud}, at each step, one ball is drawn, its color is observed, and the ball is returned in the urn with additional balls of the same color.
In this model, the number of white balls follows a distribution referred to as the beta-binomial distribution~\cite{Johnson}.

Many publications, e.g., \cite{Johnson, Balakrishnan, Bishop, Patel}, contain the moment-generating function, the characteristic function, and the probability-generating function of the hypergeometric distribution.
The most fundamental function among them is the probability-generating function
\[
G_X(z)\coloneqq \E{z^X}=\sum_{k=0}^\infty \Prob{X=k}z^k.
\]
Using $G_X$, the moment-generating function is expressed as $M_X(t)=G_X(e^t)$, the characteristic function as $\varphi_X(t)=G_X(e^{it})$ ($i=\sqrt{-1}$), and the cumulant-generating function as $K_X(t)=\ln G_X(e^t)$.
The existing formula for the probability-generating function of the hypergeometric distribution is
\begin{equation}
G_X(z)=\frac{(N-n)!(N-K)!}{N!(N-K-n)!}\GaussF{-n}{-K}{N-K-n+1}{z}.
\label{eq:G}
\end{equation}
Here, $\GaussFsymbol$ is the Gauss hypergeometric function defined by
\[
\GaussF{a}{b}{c}{z}\coloneqq\sum_{k=0}^\infty \frac{(a)_k(b)_k}{(c)_k}\frac{z^k}{k!},
\]
where $(a)_k$ is the Pochhammer symbol:
\[
(a)_0=1,\quad
(a)_k=a(a+1)\cdots(a+k-1).
\]

The Gauss hypergeometric function $\GaussF{a}{b}{c}{z}$ diverges when $c$ is zero or negative integer because $(c)_k=0$ for $k\ge|c|+1$.
Therefore, Eq.~\eqref{eq:G} is not properly defined when $n\ge N-K+1$.
The factorial $(N-K-n)!$ also diverges for such $n$, and Eq.~\eqref{eq:G} takes an indeterminate form ``$\infty/\infty$.''
The condition $n\ge N-K+1$ indicates that the number $n$ of balls drawn is greater than the number of black balls.
In this study, we aim to establish the complete formula for $G_X(z)$ by providing the determinate expression for $n\ge N-K+1$.

\begin{theorem}[Main result]
The complete probability-generating function of $X$ becomes
\begin{subnumcases}{\label{eq:thm1} G_X(z)=}
\frac{(N-n)!(N-K)!}{N!(N-K-n)!}\GaussF{-n}{-K}{N-K-n+1}{z} & $n\le N-K$, \label{eq:thm1-1}\\
\frac{n!K!}{N!(n+K-N)!}z^{n+K-N}\GaussF{n-N}{K-N}{n+K-N+1}{z} & $n\ge N-K+1$. \label{eq:thm1-2}
\end{subnumcases}
\label{thm1}
\end{theorem}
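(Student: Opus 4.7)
The plan is to compute $G_X(z)=\sum_{k}\Prob{X=k}z^k$ directly from the definition, making sure that the range of summation coincides with the actual support $\max\{0,n+K-N\}\le k\le\min\{n,K\}$, and then to recognise the resulting finite series as a Gauss hypergeometric function $\GaussFsymbol$.

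For \eqref{eq:thm1-1} I take $n\le N-K$, so that the support starts at $k=0$. I would expand the binomial coefficients in $\Prob{X=k}$ as factorials, pull out the $k$-independent prefactor, and apply the elementary Pochhammer identities
\[
(-n)_k=(-1)^k\frac{n!}{(n-k)!},\qquad (-K)_k=(-1)^k\frac{K!}{(K-k)!},\qquad (N-K-n+1)_k=\frac{(N-K-n+k)!}{(N-K-n)!},
\]
all of which are legitimate because $N-K-n\ge 0$. Matching the coefficient of $z^k/k!$ reproduces $\Prob{X=k}$ term by term and delivers the prefactor shown in~\eqref{eq:thm1-1}.

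For \eqref{eq:thm1-2} the obstruction is that $(N-K-n+1)_k$ vanishes for $k\ge n+K-N$ while the prefactor $1/(N-K-n)!$ is simultaneously zero, so \eqref{eq:G} is the indeterminate form $\infty/\infty$. The cleanest remedy is a symmetry of the urn: the random variable $Y\coloneqq X-(n+K-N)=N-K-n+X$ counts the number of black balls \emph{left in the urn} after the $n$ draws, and is therefore itself hypergeometric with parameters $(N,N-K,N-n)$. Because $n\ge N-K+1$ forces $N-n\le K$, this new distribution satisfies the hypothesis of \eqref{eq:thm1-1}. Substituting $n\mapsto N-n$ and $K\mapsto N-K$ in the case-1 formula for $G_Y(z)$, and then invoking $G_X(z)=z^{n+K-N}G_Y(z)$, produces \eqref{eq:thm1-2} after routine simplification.

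The main obstacle is conceptual rather than computational: one has to notice that the textbook derivation of \eqref{eq:G} implicitly starts the sum at $k=0$ and so tacitly requires $n\le N-K$. Once the symmetry ``white balls drawn $\leftrightarrow$ black balls remaining'' is identified, the second branch follows from the first with no further calculation of consequence.
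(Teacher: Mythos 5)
Your argument for Eq.~\eqref{eq:thm1-2} is correct and coincides with the paper's own second proof: your $Y$ is exactly the paper's $X'$ (black balls left in the urn), which is hypergeometric with sample size $N-n$ and $N-K$ ``success'' balls, falls under the branch~\eqref{eq:thm1-1} because $n\ge N-K+1$ gives $N-n\le K$, and yields~\eqref{eq:thm1-2} via $G_X(z)=z^{n+K-N}G_Y(z)$. Your sketch of~\eqref{eq:thm1-1} (which the paper simply cites as known) is also the standard direct computation and is fine.
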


Equation~\eqref{eq:thm1-2} is the correct determinate formula for $n\ge N-K+1$.
Although this result might be trivial for expert mathematicians, the explicit expression will be useful for students, engineers, practitioners, and programmers who are not necessarily familiar with mathematics.

\begin{remark}
As the first two arguments of the hypergeometric function, $-n$ and $-K$ in Eq.~\eqref{eq:thm1-1}, and $n-N$ and $K-N$ in Eq.~\eqref{eq:thm1-2}, are negative integers, the infinite sum of $\GaussFsymbol$ becomes finite.
Therefore, $\GaussFsymbol$ in Eq.~\eqref{eq:thm1} becomes a polynomial of $z$ for any $N$, $K$, and $n$, whose degree is $\min\{n, K\}$ for Eq.~\eqref{eq:thm1-1} and $\min\{N-n, N-K\}$ for Eq.~\eqref{eq:thm1-2}.
In both Eqs.~\eqref{eq:thm1-1} and \eqref{eq:thm1-2}, $G_X(z)$ has degree $\min\{n, K\}$; the degree of Eq.~\eqref{eq:thm1-2} is calculated as $n+K-N+\min\{N-n, N-K\}=\min\{n, K\}$.
\end{remark}

\begin{remark}\label{remark:1.2}
We set the range of Eq.~\eqref{eq:thm1-2} as $n\ge N-K+1$ in order to supplement the range $n\le N-K$ of Eq.~\eqref{eq:thm1-1}.
However, Eq.~\eqref{eq:thm1-2} is defined even for $n=N-K$, and both Eqs.~\eqref{eq:thm1-1} and \eqref{eq:thm1-2} for $n=N-K$ have the same form
\[
G_X(z)=\frac{n!K!}{N!}\GaussF{-n}{-K}{1}{z}.
\]
See Remark~\ref{remark:overlap} and Fig.~\ref{fig1} for this overlap in detail.
\end{remark}

The author believes that it is worth presenting the moment-generating function and characteristic function of the hypergeometric random variable $X$ explicitly, because they appear more frequently than the probability-generating function in many publications.

\begin{corollary}[Moment-generating function and characteristic function]
The moment-generating function $M_X(t)=G_X(e^t)$ and characteristic function $\varphi_X(t)=G_X(e^{it})$ of $X$ are
\[
M_X(t)=
\begin{dcases}
\frac{(N-n)!(N-K)!}{N!(N-K-n)!}\GaussF{-n}{-K}{N-K-n+1}{e^t} & n\le N-K,\\
\frac{n!K!}{N!(n+K-N)!}z^{n+K-N}\GaussF{n-N}{K-N}{n+K-N+1}{e^t} & n\ge N-K+1,
\end{dcases}
\]
and
\[
\varphi_X(t)=
\begin{dcases}
\frac{(N-n)!(N-K)!}{N!(N-K-n)!}\GaussF{-n}{-K}{N-K-n+1}{e^{it}} & n\le N-K,\\
\frac{n!K!}{N!(n+K-N)!}z^{n+K-N}\GaussF{n-N}{K-N}{n+K-N+1}{e^{it}} & n\ge N-K+1,
\end{dcases}
\]
respectively.
\end{corollary}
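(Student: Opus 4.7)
The plan is to derive both formulas by direct substitution into the main theorem. The paper already defined $M_X(t)\coloneqq G_X(e^t)$ and $\varphi_X(t)\coloneqq G_X(e^{it})$ in the introduction, so the corollary is really a restatement of Theorem~\ref{thm1} after plugging in these specific arguments in place of the free variable $z$. First I would quote the identities $M_X(t)=G_X(e^t)$ and $\varphi_X(t)=G_X(e^{it})$, and then apply Theorem~\ref{thm1} piecewise.

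For the branch $n\le N-K$, I would simply take Eq.~\eqref{eq:thm1-1} and replace $z$ by $e^t$ (respectively $e^{it}$); the prefactor $(N-n)!(N-K)!/[N!(N-K-n)!]$ is a constant independent of $z$, and the hypergeometric argument is the only place where $z$ appears. For the branch $n\ge N-K+1$, the same substitution is applied to Eq.~\eqref{eq:thm1-2}, where the factor $z^{n+K-N}$ becomes $e^{t(n+K-N)}$ (or $e^{it(n+K-N)}$) in addition to replacing the hypergeometric argument. Since $\GaussFsymbol$ is, by the remark following Theorem~\ref{thm1}, a polynomial in its fourth argument with integer arguments satisfying convergence conditions, no analytic subtlety arises when substituting the entire-function values $e^t$ or $e^{it}$, and the resulting expressions are well defined for all real $t$.

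Because $|e^{it}|=1$ and $e^t>0$, the substitutions stay inside the domain of convergence of the series defining $\GaussFsymbol$ in the general case, and they are unconditionally valid here because the series is actually a finite polynomial. Therefore no convergence or analytic continuation argument is required.

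There is essentially no hard step: the only thing to verify is that the substitution is legitimate, which follows from the polynomial nature of $\GaussFsymbol$ noted in the remark. The presentation should emphasize that the corollary is a direct corollary of Theorem~\ref{thm1}, so the proof can be kept to a few lines stating the substitutions $z\mapsto e^t$ and $z\mapsto e^{it}$ in each of the two branches of Eq.~\eqref{eq:thm1}.
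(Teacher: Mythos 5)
Your proposal is correct and matches the paper's (implicit) treatment: the corollary is an immediate consequence of Theorem~\ref{thm1} via the substitutions $z\mapsto e^t$ and $z\mapsto e^{it}$, with no convergence issue since $\GaussFsymbol$ here is a polynomial. You also correctly note that the factor $z^{n+K-N}$ must become $e^{t(n+K-N)}$ (resp.\ $e^{it(n+K-N)}$), which the corollary as printed leaves as a stray $z$ --- a typographical slip in the statement, not in your argument.
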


\section{Proofs of Theorem~\ref{thm1}}
We do not prove Eq.~\eqref{eq:thm1-1}, which is a well-known existing result and present only the proof of Eq.~\eqref{eq:thm1-2} for the $n \ge N-K+1$ case.
We provide two proofs.

\begin{proof}[First proof]
When $n\ge N-K+1$, the lower bound of the support of $X$ becomes $\max\{0, n+K-N\}=n+K-N$.
The probability-generating function can be computed as follows.
\begin{align*}
G_X(z)&=\frac{n!(N-n)!}{N!}\sum_{k=n+K-N}^{\infty}\frac{K!}{k!(K-k)!}\frac{(N-K)!}{(n-k)!(N-K-n+k)!}z^k\\
&=\frac{n!K!}{N!(n+K-N)!}z^{n+K-N}\sum_{j=0}^\infty \frac{(n+K-N)!}{(n+K-N+j)!}\frac{(N-n)!}{(N-n-j)!}\frac{(N-K)!}{(N-K-j)!}\frac{z^j}{j!}\\
&=\frac{n!K!}{N!(n+K-N)!}z^{n+K-N}\sum_{j=0}^\infty\frac{(n-N)_j(K-N)_j}{(n+K-N+1)_j}\frac{z^j}{j!}\\
&=\frac{n!K!}{N!(n+K-N)!}z^{n+K-N}\GaussF{n-N}{K-N}{n+K-N+1}{z},
\end{align*}
where $j=k-n-K+N$ is introduced in the second equality, and relations
\begin{align*}
\frac{(M+j)!}{M!}&=(M+1)(M+2)\cdots(M+j)=(M+1)_j,\\
\frac{M!}{(M-j)!}&=M(M-1)\cdots(M-j+1)=(-1)^j(-M)(-M+1)\cdots(-M+j-1)=(-1)^j(-M)_j
\end{align*}
are used in the third equality.

\end{proof}

\begin{proof}[Second proof]
We introduce the random variable $X'$ as the number of \emph{black} balls left in the urn.
The number of white balls left becomes $N-n-X'$, and the relation $K=X+(N-n-X')$ holds for the total number of white balls.
Therefore, we obtain
\[
X=X'-N+K+n.
\]

Picking $n$ balls from the urn is equivalent to choosing $N-n\eqqcolon n'$ balls to be left in the urn.
That is, $X'$ follows a hypergeometric distribution representing the number of black balls in randomly selected $n'$ balls, out of total $K'=N-K$ black balls.
When $n\ge N-K+1$, $n'$ satisfies $n'\le N-K'-1<N-K'$.
Hence, the probability-generating function of $X'$ is expressed in the form of Eq.~\eqref{eq:thm1-1} with $(n, K)$ replaced by $(n', K')$:
\begin{align*}
G_{X'}(z)&=\frac{(N-n')!(N-K')!}{N!(N-K'-n')!}\GaussF{-n'}{-K'}{N-K'-n'+1}{z}\\
&=\frac{n!K!}{N!(n+K-N)!}\GaussF{n-N}{K-N}{n+K-N+1}{z}.
\end{align*}
In conclusion,
\begin{align*}
G_X(z)&=\E{z^X}=\E{z^{X'+n+K-N}}=z^{n+K-N}G_{X'}(z)\\
&=\frac{n!K!}{N!(n+K-N)!}z^{n+K-N}\GaussF{n-N}{K-N}{n+K-N+1}{z}.
\end{align*}
\end{proof}

\begin{remark}
The Gauss hypergeometric function $\GaussF{a}{b}{c}{z}$ cannot be defined for $c=0,-1,-2,\ldots$, but the limit
\begin{equation}
\lim_{c\to-m}\frac{\GaussF{a}{b}{c}{z}}{\Gamma(c)}=\frac{(a)_{m+1}(b)_{m+1}}{(m+1)!}z^{m+1}\GaussF{a+m+1}{b+m+1}{m+2}{z}
\label{eq:scaled}
\end{equation}
holds for nonnegative integer $m$ (see~\cite{Olver}), where $\Gamma$ is the gamma function.
In this sense, $\GaussF{a}{b}{c}{z}/\Gamma(c)$ exists for any $c$.
Using this property, we can attain Eq.~\eqref{eq:thm1-2} from Eq.~\eqref{eq:thm1-1} as a formality by taking a suitable limit.
Equation~\eqref{eq:thm1-1} can be rewritten as
\[
G_X(z)=\frac{(N-n)!(N-K)!}{N!}\lim_{\varepsilon\to0}\frac{\GaussF{-n}{-K}{N-K-n+1+\varepsilon}{z}}{\Gamma(N-K-n+1+\varepsilon)}.
\]
The right-hand side remains finite even for $n\ge N-K+1$, which becomes
\[
\frac{(N-n)!(N-K)!}{N!}\frac{(-n)_{n+K-N}(-K)_{n+K-N}}{(n+K-N)!}z^{n+K-N}\GaussF{K-N}{n-N}{n+K-N+1}{z},
\]
owing to Eq.~\eqref{eq:scaled}.
By using
\[
(-n)_{n+K-N}=(-n)(-n+1)\cdots(K-N-1)=(-1)^{n+K-N}\frac{n!}{(N-K)!}
\]
and
\[
(-K)_{n+K-N}=(-K)(-K+1)\cdots(n-N-1)=(-1)^{n+K-N}\frac{K!}{(N-n)!},
\]
Eq.~\eqref{eq:thm1-2} is obtained.
\end{remark}

\section{Alternative expression}
As stated in the second proof in the previous section, picking $n$ balls is equivalent to selecting $N-n$ balls to be left in the urn.
It is expected that a kind of symmetry for $n$ and $N-n$ exists in $G_X(z)$.
However, Eq.~\eqref{eq:thm1} does not directly display this symmetry.
In fact, Eq.~\eqref{eq:thm1} switches its form at $n=N-K$, not $n=N/2$.
We seek an alternative expression of $G_X(z)$ in this section, focusing on this symmetry.

\begin{corollary}
The probability-generating function $G_X(z)$ in Eq.~\eqref{eq:thm1} is rewritten in the following form.
\begin{enumerate}
\renewcommand{\labelenumi}{$\mathrm{(\alph{enumi})}$}
\renewcommand{\theenumi}{\labelenumi}
\item\label{itm:cor1} When $K\le N/2$,
\begin{subnumcases}{\label{eq:cor-1}G_X(z)=}
\frac{(N-n)!(N-K)!}{N!(N-K-n)!}\GaussF{-n}{-K}{N-K-n+1}{z} & $n\le \dfrac{N}{2}$,\label{eq:cor-1-1}\\
\frac{n!(N-K)!}{N!(n-K)!}z^K\GaussF{n-N}{-K}{n-K+1}{z^{-1}} & $n\ge \dfrac{N}{2}$.\label{eq:cor-1-2}
\end{subnumcases}
\item\label{itm:cor2} When $K\ge N/2$,
\begin{subnumcases}{\label{eq:cor-2}G_X(z)=}
\frac{(N-n)!K!}{N!(K-n)!}z^n\GaussF{-n}{K-N}{K-n+1}{z^{-1}} & $n\le \dfrac{N}{2}$,\label{eq:cor-2-1}\\
\frac{n!K!}{N!(n+K-N)!}z^{n+K-N}\GaussF{n-N}{K-N}{n+K-N+1}{z} & $n\ge \dfrac{N}{2}$.\label{eq:cor-2-2}
\end{subnumcases}
\end{enumerate}
\label{cor}
\end{corollary}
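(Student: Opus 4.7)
The plan is to exploit the duality between choosing which $n$ balls to pick and choosing which $N-n$ balls to leave. Let $Y$ denote the number of white balls \emph{left} in the urn. Since $X + Y = K$, we have $X = K - Y$, hence
\[
G_X(z) = \E{z^{K-Y}} = z^K G_Y(z^{-1}),
\]
and $Y$ itself follows a hypergeometric distribution, now with parameters $(N, K, N-n)$ in place of $(N, K, n)$. The corollary will therefore reduce to applying Theorem~\ref{thm1} to $G_Y$ and pulling the factor $z^K$ through.

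Equations~\eqref{eq:cor-1-1} and \eqref{eq:cor-2-2} are immediate restrictions of Eqs.~\eqref{eq:thm1-1} and~\eqref{eq:thm1-2}, respectively. Under the hypothesis $K \le N/2$ of case~\ref{itm:cor1}, the range $n \le N/2$ implies $n \le N-K$, which is exactly the range of~\eqref{eq:thm1-1}; under the hypothesis $K \ge N/2$ of case~\ref{itm:cor2}, the range $n \ge N/2$ implies $n \ge N-K$, and the boundary $n = N-K$ is absorbed by Remark~\ref{remark:1.2}, so~\eqref{eq:thm1-2} applies. Hence these two sub-cases require no work beyond Theorem~\ref{thm1}.

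The remaining two sub-cases carry all the content. For~\eqref{eq:cor-1-2}, I would assume $K \le N/2$ and $n \ge N/2$; then $n \ge K$ and hence $N-n \le N-K$, so Eq.~\eqref{eq:thm1-1} applied to $G_Y(w)$ under the substitution $(n, K) \to (N-n, K)$ produces a polynomial in $w$ with prefactor $n!(N-K)!/[N!(n-K)!]$ and hypergeometric arguments $(n-N, -K; n-K+1; w)$. Substituting $w = z^{-1}$ and multiplying by $z^K$ then yields~\eqref{eq:cor-1-2} directly. Symmetrically, for~\eqref{eq:cor-2-1}, I would assume $K \ge N/2$ and $n \le N/2$; then $n \le K$ and hence $N-n \ge N-K$, so Eq.~\eqref{eq:thm1-2} applied to $G_Y$ (with the boundary $n = K$ handled by Remark~\ref{remark:1.2}) gives a formula containing the factor $w^{K-n}$. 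Feeding this into $G_X(z) = z^K G_Y(z^{-1})$ combines the overall $z$-powers as $z^K \cdot z^{-(K-n)} = z^n$, producing~\eqref{eq:cor-2-1}.

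The main obstacle is nothing deep; it is essentially bookkeeping, namely verifying that in each regime the parameters of the hypergeometric distribution for $Y$ land in the correct branch of Theorem~\ref{thm1}, and checking that the overall power of $z$ comes out as advertised after the change of variable $z \mapsto z^{-1}$. No new analytic identity for ${}_2F_1$ is needed, which is the payoff of working through the dual variable $Y$ rather than attempting a direct $z \to z^{-1}$ transformation of the hypergeometric function.
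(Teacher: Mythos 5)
Your proposal is correct, but it follows a genuinely different route from the paper. The paper proves the corollary analytically: it applies the connection formula $\GaussF{-m}{b}{c}{z}=\frac{(b)_m}{(c)_m}(-z)^m\GaussF{-m}{1-c-m}{1-b-m}{z^{-1}}$ to each branch of Theorem~\ref{thm1} (with $m=K$ resp.\ $m=N-n$) and then simplifies the resulting Pochhammer prefactors, tracking carefully for which $n$ each transformed hypergeometric function is defined. You instead argue probabilistically through the complementary sample: with $Y$ the number of white balls left, $X=K-Y$ gives $G_X(z)=z^K G_Y(z^{-1})$, and $Y$ is hypergeometric with sample size $N-n$, so each of \eqref{eq:cor-1-2} and \eqref{eq:cor-2-1} is just Theorem~\ref{thm1} applied to $Y$ in the appropriate branch ($n\ge N/2\ge K$ forces $N-n\le N-K$, and $n\le N/2\le K$ forces $N-n\ge N-K$, with the boundary cases $n=N-K$ and $n=K$ covered by Remark~\ref{remark:1.2}); the easy branches \eqref{eq:cor-1-1} and \eqref{eq:cor-2-2} are direct restrictions, exactly as in the paper. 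Your route is essentially the paper's \emph{second proof of Theorem~\ref{thm1}} transplanted to the corollary: it needs no new ${}_2F_1$ identity, only bookkeeping of parameters and the power of $z$, which makes it more elementary and makes the $n\leftrightarrow N-n$, $z\leftrightarrow z^{-1}$ symmetry (and the residual factor $z^K$) conceptually transparent. What the paper's analytic proof buys in exchange is an explicit verification that the two hypergeometric forms coincide as identities on the overlap ranges (e.g.\ $K\le n\le N-K$), which feeds directly into the discussion of overlapping validity regions in Remark~\ref{remark:overlap}; your argument establishes the formulas but leaves that function-level identity implicit (both sides equal $G_X$, hence each other). Either way the statement is proved; do make sure, if you write it up, to state explicitly that $Y$ has parameters $(N,K,N-n)$ and to note $z\neq 0$ when invoking $G_X(z)=z^KG_Y(z^{-1})$, since the target formulas involve $z^{-1}$.
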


As a formality, Eq.~\eqref{eq:cor-1-2} is formed from Eq.~\eqref{eq:cor-1-1} by replacing $n$ with $N-n$ and $z$ with $z^{-1}$, and multiplying $z^K$.
The same replacement rule is applicable to Eq.~\eqref{eq:cor-2}.
The factor $z^K$ represents that the symmetry is incomplete.

\begin{proof}
We use the formula~\cite{Olver}
\begin{equation}
\GaussF{-m}{b}{c}{z}=\frac{(b)_m}{(c)_m}(-z)^m\GaussF{-m}{1-c-m}{1-b-m}{z^{-1}},
\label{eq:transform}
\end{equation}
where $m$ is a nonnegative integer.

\ref{itm:cor1}
Equation~\eqref{eq:cor-1-1} is the same as Eq.~\eqref{eq:thm1-1} except for the range of $n$.
When $K\le N/2$, the range $n\le N/2$ is included in $n\le N-K$, and Eq.~\eqref{eq:cor-1-1} is proven.

To obtain Eq.~\eqref{eq:cor-1-2}, we apply the formula~\eqref{eq:transform} to Eq.~\eqref{eq:thm1-1} with $m=K$, $b=-n$, and $c=N-K-n+1$:
\begin{align}
&\frac{(N-n)!(N-K)!}{N!(N-K-n)!}\GaussF{-n}{-K}{N-K-n+1}{z}\nonumber\\
&=\frac{(N-n)!)N-K)!}{N!(N-K-n)!}\frac{(-n)_K}{(N-K-n+1)_K}(-z)^K\GaussF{n-N}{-K}{n-K+1}{z^{-1}}.
\label{eq:proof1}
\end{align}
By substituting relations
\begin{align*}
(-n)_K&=(-n)(-n+1)\cdots(-n+K-1)=(-1)^K\frac{n!}{(n-K)!},\\
(N-K-n+1)_K&=(N-K-n+1)(N-K-n+2)\cdots(N-n)=\frac{(N-n)!}{(N-K-n)!},
\end{align*}
we obtain
\[
G_X(z)=\frac{n!(N-K)!}{N!(n-K)!}z^K \GaussF{n-N}{-K}{n-K+1}{z^{-1}}.
\]
The range of $n$ where hypergeometric functions in Eq.~\eqref{eq:proof1} are properly defined is $K\le n\le N-K$.
Therefore, Eq.~\eqref{eq:cor-1-2} for $N/2\le n\le N-K$ is proven.

Equation~\eqref{eq:cor-1-2} for $n\ge N-K$ can be derived from Eq.~\eqref{eq:thm1-2}.
By using Eq.~\eqref{eq:transform} with $m=N-n$, $b=K-N$, and $c=n+K-N+1$, Eq.~\eqref{eq:thm1-2} becomes
\begin{align*}
&\frac{n!K!}{N!(n+K-N)!}\frac{(K-N)_{N-n}}{(n+K-N+1)_{N-n}}(-1)^{N-n}z^K\GaussF{n-N}{-K}{n-K+1}{z^{-1}}\\
&=\frac{n!(N-K)!}{N!(n-K)!}z^K\GaussF{n-N}{-K}{n-K+1}{z^{-1}},
\end{align*}
where
\[
(K-N)_{N-n}=(-1)^{N-n}\frac{(N-K)!}{(n-K)!},\quad
(n+K-N+1)_{N-n}=\frac{K!}{(n+K-N)!}.
\]

\ref{itm:cor2}
Equation~\eqref{eq:cor-2-2} can be directly obtained from Eq.~\eqref{eq:thm1-2}, as Eq.~\eqref{eq:cor-1-1} was derived in \ref{itm:cor1} from Eq.~\eqref{eq:thm1-1}.
As above, Equation~\eqref{eq:cor-2-1} is derived from Eq.~\eqref{eq:thm1} by using the formula~\eqref{eq:transform}, where Eqs.~\eqref{eq:thm1-1} and \eqref{eq:thm1-2} covers $n\le N-K$ and $N-K\le n\le N/2$, respectively.
\end{proof}

\begin{figure}
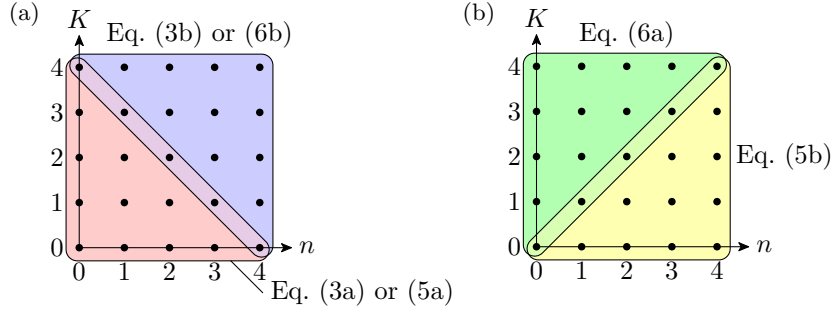
\centering
\raisebox{19mm}{(a)}
\begin{minipage}[c]{54mm}
\includegraphics[clip]{fig1a.mps}
\end{minipage}
\raisebox{19mm}{(b)}
\begin{minipage}[c]{44mm}
\includegraphics[clip]{fig1b.mps}\\
\vspace{3.47mm}
\end{minipage}
\caption{
Covering regions of Eqs.~\eqref{eq:thm1}, \eqref{eq:cor-1}, and \eqref{eq:cor-2} for $N=4$ on the $n$--$K$ plane.
}
\label{fig1}
\end{figure}

\begin{remark}\label{remark:overlap}
In Corollary~\ref{cor}, ``$n\le N/2$'' and ``$n\ge N/2$'' are used to highlight the symmetry of $n$ and $N-n$.
These ranges can be extended as long as the third argument of the hypergeometric function is a positive integer.
For example, Eqs.~\eqref{eq:cor-1-1} and \eqref{eq:cor-1-2} are valid for $n\le N-K$ and $n\ge K$, respectively; both equations include $K\le n\le N-K$, and they become identical.
Similarly, the ranges of $n$ in Eqs.~\eqref{eq:cor-2-1} and \eqref{eq:cor-2-2} can be extended to $n\le K$ and $n\ge N-K$, respectively.
Hence, Eqs.~\eqref{eq:cor-1-1} and \eqref{eq:cor-2-2} are essentially identical to Eqs.~\eqref{eq:thm1-1} and \eqref{eq:thm1-2}, respectively.
Figure~\ref{fig1} shows regions where the equations in Theorem~\ref{thm1} and Corollary~\ref{cor} are defined.
In Fig.~\ref{fig1}(a), the overlap of two regions consists of the points on the descending diagonal $n=N-K$.
This overlap is mentioned in Remark~\ref{remark:1.2}.
In Fig.~\ref{fig1}(b), the overlap of Eqs.~\eqref{eq:cor-1-2} and \eqref{eq:cor-2-1} occurs along the ascending diagonal $n=K$.
Furthermore, when $N$ is even, the $n=K=N/2$ case is included in all four regions, and $G_X(z)$ is the same regardless of which equation is used.
Specifically, when $N=2m$ and $N=K=m$, the probability-generating function $G_X(z)$ can be written using the Legendre polynomial $P_m$:
\[
G_X(z)=\frac{(m!)^2}{(2m)!}\GaussF{-m}{-m}{1}{z}=\frac{(m!)^2}{(2m)!}(z-1)^m\GaussF{-m}{m+1}{1}{(1-z)^{-1}}=\frac{(m!)^2}{(2m)!}(z-1)^m P_m\left(\frac{z+1}{z-1}\right),
\]
where the formula~\cite{Olver}
\[
\GaussF{-m}{b}{c}{z}=\frac{(b)_m}{(c)_m}(1-z)^m\GaussF{-m}{c-b}{1-b-m}{(1-z)^{-1}}
\]
is used in the third equality, and the hypergeometric representation~\cite{Olver} of the Legendre polynomial
\[
P_m(x)=\GaussF{-m}{m+1}{1}{\frac{1-x}{2}}
\]
is used in the last equality.
\end{remark}

\section{Moments}
A typical application of the probability- and moment-generating functions is the calculation of moments.
In many books, e.g., \cite{Johnson, Balakrishnan, Mukhopadhyay, Bishop, Patel}, the mean and variance of the hypergeometric distribution become
\[
\E{X}=\frac{nK}{N},\quad
\Var{X}=\frac{n(N-n)K(N-K)}{N^2(N-1)},
\]
respectively.
More generally, factorial moments, $\E{X(X-1)\cdots(X-r+1)}$ for $r=1, 2,\ldots$, can be exactly calculated for the hypergeometric distribution:
\begin{equation}
\E{X(X-1)\cdots(X-r+1)}=\frac{n!K!(N-r)!}{N!(K-r)!(N-r)!}.
\label{eq:factorial}
\end{equation}
Although direct computation of Eq.~\eqref{eq:factorial} based on the probability mass function~\eqref{eq:pmf} is possible, the factorial moments can be systematically calculated using the probability-generating function $G_X$ as
\[
\E{X(X-1)\cdots(X-r+1)}=\left.\frac{d^r G_X(z)}{dz^r}\right|_{z=1}.
\]

Using the probability-generating function in Eq.~\eqref{eq:thm1-1}, the factorial moment in Eq.~\eqref{eq:factorial} can be easily obtained using formulas (see~\cite{Olver}):
\[
\frac{d^r}{dz^r}\GaussF{a}{b}{c}{z}=\frac{(a)_r(b)_r}{(c)_r}\GaussF{a+r}{b+r}{c+r}{z}
\]
and
\[
\GaussF{a}{b}{c}{1}=\frac{\Gamma(c)\Gamma(c-a-b)}{\Gamma(c-a)\Gamma(c-b)}.
\]

What computation is performed for $n\ge N-K+1$ in which Eq.~\eqref{eq:thm1-2} is applied?
The effective formula~\cite{Olver} in this case is
\begin{equation}
\frac{d^r}{dz^r}(z^{c-1}\GaussF{a}{b}{c}{z})=(c-r)_r z^{c-r-1}\GaussF{a}{b}{c-r}{z}.
\label{eq:diff}
\end{equation}
For $\GaussF{n-N}{K-N}{n+K-N+1}{z}$ ($a=n-N$, $b=K-N$, and $c=n+K-N+1$) in Eq.~\eqref{eq:thm1-2}, $c-r=n+K-N+1-r$ becomes $0$ or a negative integer when $r\ge n+K-N+1$.
Therefore, $\GaussF{a}{b}{c-r}{z}$ cannot be defined for such $r$, and a special treatment is necessary.

\begin{proposition}[Supplement formula to Eq.~\eqref{eq:diff}]\label{prop1}
For positive integers $m$ and $r$ which satisfy $m\le r$,
\[
\frac{d^r}{dz^r}(z^{m-1}\GaussF{a}{b}{m}{z})
=\frac{(m-1)!(a)_{r-m+1}(b)_{r-m+1}}{(r-m+1)!}\GaussF{a+r-m+1}{b+r-m+1}{r-m+2}{z}.
\]
\end{proposition}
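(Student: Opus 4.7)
The plan is to split the $r$-fold differentiation into two stages, applying formula~\eqref{eq:diff} only in the range where its right-hand side remains well defined and then invoking the standard derivative formula for the Gauss hypergeometric function.

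First I would apply~\eqref{eq:diff} with $c=m$ exactly $m-1$ times, stopping just before the third argument $c-r'$ would reach zero. Taking $r'=m-1$ in~\eqref{eq:diff} gives
\[
\frac{d^{m-1}}{dz^{m-1}}\left(z^{m-1}\GaussF{a}{b}{m}{z}\right) = (1)_{m-1}\,z^{0}\,\GaussF{a}{b}{1}{z} = (m-1)!\,\GaussF{a}{b}{1}{z},
\]
which is well defined because the third argument is $1$. This single step simultaneously eliminates the awkward prefactor $z^{m-1}$ and leaves a clean hypergeometric function whose third argument is a safely positive integer.

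Next I would differentiate the result a further $r-(m-1)=r-m+1$ times, using the standard derivative identity $\frac{d^{s}}{dz^{s}}\GaussF{a}{b}{c}{z}=\frac{(a)_{s}(b)_{s}}{(c)_{s}}\GaussF{a+s}{b+s}{c+s}{z}$ with $c=1$ and $s=r-m+1$. Since $(1)_{s}=s!$, the denominator $(c)_{s}$ becomes $(r-m+1)!$, and multiplying by the $(m-1)!$ from the first stage reproduces exactly the right-hand side of the proposition.

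The step to watch, rather than a genuine obstacle, is verifying that~\eqref{eq:diff} is legitimately applicable throughout the first stage: each application reduces the third argument by $1$, so after $k$ derivatives it equals $m-k$, which stays positive for $k\le m-1$; hence every intermediate hypergeometric function is well defined. The boundary cases $m=1$ (where the first stage is vacuous and only the standard derivative formula is used) and $m=r$ (where the second stage consists of a single derivative with $s=1$) are both recovered automatically from this scheme.
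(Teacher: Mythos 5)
Your proof is correct, and it takes a genuinely different route from the paper. The paper proves the proposition by a limiting argument: it rewrites Eq.~\eqref{eq:diff} with gamma functions, $\frac{d^r}{dz^r}(z^{c-1}\GaussFsymbol) = \frac{\Gamma(c)}{\Gamma(c-r)}z^{c-r-1}\GaussF{a}{b}{c-r}{z}$, and then lets $c\to m$ using the scaled-limit identity~\eqref{eq:scaled}, since $m-r$ is a nonpositive integer; this mirrors the formal derivation of Eq.~\eqref{eq:thm1-2} from Eq.~\eqref{eq:thm1-1} in Remark~2.3. You instead split the $r$ derivatives into two stages: first $m-1$ derivatives via Eq.~\eqref{eq:diff} itself, which is legitimately applicable because the resulting third argument is $m-(m-1)=1$, yielding $(1)_{m-1}\GaussF{a}{b}{1}{z}=(m-1)!\,\GaussF{a}{b}{1}{z}$ with the power of $z$ already exhausted; then the remaining $r-m+1$ derivatives via the standard formula $\frac{d^{s}}{dz^{s}}\GaussF{a}{b}{c}{z}=\frac{(a)_s(b)_s}{(c)_s}\GaussF{a+s}{b+s}{c+s}{z}$ with $c=1$, so $(1)_{r-m+1}=(r-m+1)!$ gives exactly the stated right-hand side, and the derivative counts add to $(m-1)+(r-m+1)=r$. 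Both ingredients are formulas already quoted in Section~4, and your handling of the boundary cases $m=1$ and $m=r$ is sound. What each approach buys: yours is more elementary and self-contained, staying entirely within well-defined hypergeometric expressions and avoiding limits and the gamma-function continuation; the paper's limit argument is shorter and emphasizes the unifying role of the scaled function $\GaussFsymbol/\Gamma(c)$, which is the same device used elsewhere in the paper to pass from the degenerate to the corrected formula.
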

\begin{proof}
Using the gamma function instead of the Pochhammer symbol, Eq.~\eqref{eq:diff} can be written as
\[
\frac{d^r}{dz^r}(z^{c-1}\GaussF{a}{b}{c}{z})=\frac{\Gamma(c)}{\Gamma(c-r)}z^{c-r-1}\GaussF{a}{b}{c-r}{z}.
\]
In taking the $c\to m$ limit, Eq.~\eqref{eq:scaled} is used because $m-r$ is a nonpositive integer:
\[
\lim_{c\to m}\frac{\GaussF{a}{b}{c-r}{z}}{\Gamma(c-r)}=\frac{(a)_{r-m+1}(b)_{r-m+1}}{(r-m+1)!}z^{r-m+1}\GaussF{a+r-m+1}{b+r-m+1}{r-m+2}{z}.
\]
\end{proof}

In the computation of the $r$th factorial moment of $X$ for $n\ge N-K+1$, Eq.~\eqref{eq:diff} can be directly applied for $r\le n+K-N$, and Proposition~\ref{prop1} is needed for $r\ge  n+K-N+1$.
In both cases, the final formula of the factorial moment becomes identical with the existing formula~\eqref{eq:factorial}.
Therefore, the moments are not affected when starting from the new formula~\eqref{eq:thm1-2} for $n\ge N-K+1$.
The author presumes that Eq.~\eqref{eq:thm1-2} has not focused thus far because the moments become the same as the formulas obtained from the existing expression~\eqref{eq:thm1-1}.
Moreover, individual values of generating functions have probably been less frequently required, which has kept researchers away from careful investigation of the generating functions of the hypergeometric distribution.

\bibliographystyle{cpclike} 
\bibliography{refs}
\end{document}